\newtheorem{theorem}{Theorem}[section]
\newtheorem{proposition}[theorem]{Proposition}
\theoremstyle{definition}
\newtheorem{definition}[theorem]{Definition}
\newtheorem{example}[theorem]{Example}
\theoremstyle{remark}
\newtheorem{remark}[theorem]{Remark}
\numberwithin{equation}{section}
\begin{document}

\title{Fuzzy almost quadratic functions}
\author[A.K. Mirmostafaee, M.S. Moslehian]{A. K. Mirmostafaee$^1$ and  M. S. Moslehian$^2$}
\address{$^1$Alireza Kamel Mirmostafaee: Department of Mathematics, Ferdowsi
University, P. O. Box 1159, Mashhad 91775, Iran; \newline Banach
Mathematical Research Group (BMRG), Mashhad, Iran.}
\email{mirmostafaei@ferdowsi.um.ac.ir}
\address{$^2$Mohammad Sal Moslehian: Department of
Mathematics, Ferdowsi University, P. O. Box 1159, Mashhad 91775,
Iran;\newline Centre of Excellence in Analysis on Algebraic
Structures (CEAAS), Ferdowsi Univ., Iran.}
\email{moslehian@ferdowsi.um.ac.ir and moslehian@ams.org}

\subjclass[2000]{Primary 46S40; Secondary 39B52, 39B82, 26E50,
46S50.}

\keywords{Fuzzy normed space; quadratic function; fuzzy almost
quadratic function; Hyers--Ula--Rassias stability.}

\begin{abstract}
We approximate a fuzzy almost quadratic function by a quadratic
function in a fuzzy sense. More precisely, we establish a fuzzy
Hyers--Ulam--Rassias stability of the quadratic functional equation
$f(x+y)+f(x-y)=2f(x)+2f(y)$. Our result can be regarded as a
generalization of the stability phenomenon in the framework of
normed spaces. We also prove a generalized version of fuzzy
stability of the Pexiderized quadratic functional equation
$f(x+y)+f(x-y)=2g(x)+2h(y)$.
\end{abstract} \maketitle

\section{introduction and preliminaries}
In order to construct a fuzzy structure on a linear space, A. K.
Katsaras \cite{KAT} defined the notion of fuzzy norm on  a linear
space. Later, a few  mathematicians have introduced and discussed
several notions of fuzzy norm from different points of view
\cite{FEL, K-S, X-Z}. In particular, T. Bag and S. K. Samanta
\cite{B-S1}, gave a new definition of a fuzzy norm in such a manner
that the corresponding fuzzy metric is of Kramosil and Michalek type
\cite{K-M}. They also studied some nice properties of the fuzzy norm
in \cite{B-S2}.

In mathematical analysis we may meet the following stability
problem: ``Assume that a function satisfies a functional equation
approximately according to some convention. Is it then possible to
find near this function a function satisfying the equation
accurately?'' In 1940, S. M. Ulam \cite{ULA} posed the first
stability problem. In the next year, D. H. Hyers \cite{HYE} gave a
partial affirmative answer to the question of Ulam. Hyers' theorem
was generalized by T. Aoki \cite{AOK} for additive mappings and by
Th. M. Rassias \cite{RAS1} for linear mappings by considering an
unbounded Cauchy difference. The paper \cite{RAS1} of Th. M. Rassias
has provided a lot of influence in the development of what we now
call Hyers--Ulam--Rassias stability of functional equations. We
refer the interested readers for more information on such problems
to e.g. \cite{B-M, CZE3, H-I-R, JUN, RAS3, RAS4}.

The functional equation
$$f(x+y)+f(x-y)=2g(x)+2h(y)$$
is said to be a Pexiderized quadratic functional equation. In the
case that $f=g=h$, it is called the quadratic functional equation.
The first author treating the stability of the quadratic equation
was F. Skof \cite{SKO} by proving that if $f$ is a mapping from a
normed space $X$ into a Banach space $Y$ satisfying
$\|f(x+y)+f(x-y)-2f(x)-2f(y)\|\leq \epsilon$ for some $\epsilon>0$,
then there is a unique quadratic function $g:X\to Y$ such that
$\|f(x)-g(x)\|\leq\frac{\epsilon}{2}$. P. W. Cholewa \cite{CHO}
extended Skof's theorem by replacing $X$ by an abelian group G.
Skof's result was later generalized by S. Czerwik \cite{CZE1} in the
spirit of Hyers--Ulam--Rassias. S.-M. Jung and P. K. Sahoo
\cite{J-S}, and K.W. Jun and Y. H. Lee \cite{J-L} proved the
stability of quadratic equation of Pexider type. The stability
problem of the quadratic equation has been extensively investigated
by a number of mathematicians, see \cite{CZE2, CZE3, J-S, M-M, RAS2}
and references therein.

In this paper, we use the definition of a fuzzy normed space given
in \cite{B-S1} to exhibit two reasonable fuzzy versions of
stability for (Pexiderized) quadratic functional equation in the
fuzzy normed linear space setting. More precisely, we approximate
a function $f$ form a space $X$ to a fuzzy Banach space $Y$ by a
quadratic function $Q:X \to Y$ in a fuzzy sense. In fact, we
obtain a fuzzy Hyers--Ulam--Rassias stability of the quadratic
equation in section 2 and a generalized version of fuzzy
stability of a Pexiderized quadratic equation in section 3,.

Some fuzzy stability results have been already established for the
Cauchy equation $f(x+y)=f(x)+f(y)$ in \cite{KM-M} and for the Jensen
equation in \cite{M-M-M}.

Following \cite{B-S1}, we give our notion of a fuzzy norm.
\begin{definition} Let $X$ be a real linear space. A function $N \colon   X \times {\mathbb R} \to [0,1]$ (the so-called
fuzzy subset) is said to be a fuzzy norm on $X$ if for all $x, y
\in X$ and all $s, t \in {\mathbb R}$,

(N1) $N(x, c) = 0$ for $c \leq 0$;

(N2) $x=0$ if and only if $N(x,c) = 1$ for all $c>0$;

(N3) $N(cx,t) = N(x, \frac{t}{|c|})$ if $c \neq 0$;

(N4) $N(x + y, s + t) \geq \min \{N(x, s), N(y, t)\}$;

(N5) $N(x, .)$ is a non-decreasing function on ${\mathbb R}$  and
$\lim_{t \to \infty}N(x, t) = 1$.

The pair $(X,N)$ is called a fuzzy normed linear space. One may
regard $N(x, t)$ as the truth value of the statement `the norm of
x is less than or equal to the real number t'.
\end{definition}
\begin{example}\label{exam}
Let $(X, \|.\|)$ be a normed linear space. One can be easily
verify  that for each $k > 0$,
$$N_k (x, t) =\left \{ \begin{array}{ll}
\frac{t}{t+ k \|x\|} & \qquad t> 0   \\
0 &\qquad t\leq 0
\end{array}\right.$$
defines a fuzzy norm on $X$.
\end{example}
\begin{example} \label{examm} Let $(X, \|.\|)$ be a normed linear space. Then
$$N(x, t) =\left \{ \begin{array}{ll}
0 &\qquad t\leq \|x\| \\
1 &\qquad  t > \|x\|
\end{array}\right.$$
is a fuzzy norm on $X$.
\end{example}
Let $(X,N)$ be a fuzzy normed linear space. Let $\{x_n\}$ be a
sequence in $X$. Then $\{x_n\}$ is said to be convergent if there
exists $x\in X$ such that $$lim_{n\to \infty} N(x_n-x, t) = 1$$
for all $t > 0$. In that case, $x$ is called the limit of the
sequence $\{x_n\}$ and we denote it by $N-lim x_n=x$.

A sequence $\{x_n\}$ in $X$ is called Cauchy if for each
$\varepsilon>0$ and each $t >0$ there exists $n_0$ such that for
all $n \geq n_0$ and all $p>0$ we have $N(x_{n+p}-x_n,
t)>1-\varepsilon$.

It is known that every convergent sequence in a fuzzy normed space
is Cauchy. If each Cauchy sequence is convergent, then the fuzzy
norm is said to be complete and the fuzzy normed space is called a
fuzzy Banach space.

\section{Fuzzy Hyers--Ulam--Rassias stability of the quadratic equation}

Let $f$ be a function from a fuzzy normed space $(X, N)$ into a
fuzzy Banach space $(Y, N')$ and $q \neq \frac{1}{2}$. The
function $f$ is called a \emph{fuzzy $q$-almost quadratic
function}, if
\begin{equation}\label{pq}
N'(f(x+y) + f(x-y) - 2f(x) - 2f(y), t+s) \geq \min \{N(x, t^q),
N(y, s^q )\}
\end{equation}
for all $x, y \in X$ and all $s, t \in [0, \infty)$.

The following result gives a Hyers--Ulam--Rassias stability of the
quadratic equation $f(x+y) + f(x-y) = 2f(x) + 2f(y)$.

\begin{theorem} \label{t1}
Let $q > {1 \over 2}$ and $f$ be a fuzzy $q$-almost quadratic
function from a fuzzy normed space $(X, N)$ into a fuzzy Banach
space $(Y, N')$. Then there is a unique quadratic  function $Q:X
\to Y$ such that for each $x \in X$,
\begin{equation}\label{Tfts}
N'(Q(x) - f(x), t) \geq  N(x, (\frac{2^{2-p}-1}{4})^q t^q) \qquad
(x \in X, t >0),
\end{equation}
where $p= {1\over q}$.
\end{theorem}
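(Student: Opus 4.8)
The plan is to follow the classical direct (Hyers) method adapted to the fuzzy setting. First I would extract information about $f$ near $0$: setting $x=y$ in the defining inequality \eqref{pq} gives a fuzzy estimate relating $f(2x)$ to $4f(x)$, namely
\begin{equation*}
N'(f(2x) - 4f(x) - f(0), 2t) \geq N(x, t^q),
\end{equation*}
and a separate substitution $x=y=0$ controls $f(0)$ (in fact forcing a bound that lets us assume $f(0)=0$, or at least absorb it). Dividing by $4$ and using (N3), this says $\tfrac{f(2x)}{4}$ is fuzzily close to $f(x)$ with an explicit modulus. The natural candidate is $Q(x) = N\text{-}\lim_n \tfrac{f(2^n x)}{4^n}$.

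Next I would establish that the sequence $\{f(2^n x)/4^n\}$ is Cauchy in $(Y,N')$. The key is a telescoping estimate: writing $\tfrac{f(2^{n+m}x)}{4^{n+m}} - \tfrac{f(2^n x)}{4^n}$ as a sum of consecutive differences $\tfrac{f(2^{k+1}x)}{4^{k+1}} - \tfrac{f(2^k x)}{4^k}$, each term is controlled by $N(2^k x, \cdot) = N(x, \cdot/2^k)$ via (N3), and because $q>\tfrac12$ (equivalently $p=1/q<2$) the scaling factors $2^{k(2-p)}$ — or rather their reciprocals — form a convergent geometric series. Repeated application of (N4) (the $\min$-superadditivity of $N'$ over a sum of terms, with the total "budget" $t$ split among the terms proportionally to the geometric weights) yields
\begin{equation*}
N'\Bigl(\tfrac{f(2^{n+m}x)}{4^{n+m}} - \tfrac{f(2^n x)}{4^n},\, t\Bigr) \geq N\Bigl(x,\, c_{n}\, t^q\Bigr)
\end{equation*}
for a constant $c_n \to \infty$, which by (N5) tends to $1$; this gives both Cauchyness (hence convergence, since $Y$ is a fuzzy Banach space) and, in the limit $m\to\infty$ with $n=0$, the explicit bound \eqref{Tfts} with the constant $\bigl(\tfrac{2^{2-p}-1}{4}\bigr)^q$.

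Then I would check that $Q$ is quadratic. Applying \eqref{pq} to the pair $(2^n x, 2^n y)$ and dividing by $4^n$ gives
\begin{equation*}
N'\Bigl(\tfrac{1}{4^n}\bigl(f(2^n(x+y))+f(2^n(x-y))-2f(2^nx)-2f(2^ny)\bigr),\, \tfrac{t+s}{4^n}\Bigr) \geq \min\{N(x, 2^{-nq}\cdot\text{(stuff)}),\dots\},
\end{equation*}
and as $n\to\infty$ the right side $\to 1$ by (N5) while the left side, by continuity of $N'$ and convergence of each term, forces $Q(x+y)+Q(x-y)-2Q(x)-2Q(y)=0$ (using (N2)). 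Finally, uniqueness: if $Q'$ is another quadratic map satisfying \eqref{Tfts}, then $Q(x)-Q'(x) = \tfrac{1}{4^n}\bigl(Q(2^nx)-Q'(2^nx)\bigr)$ by homogeneity of quadratic maps, and the two bounds combined with (N4) show $N'(Q(x)-Q'(x), t)\geq N(x, c\,4^{n(\text{positive})} t^q) \to 1$ for all $t>0$, hence $Q=Q'$ by (N2).

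The main obstacle, and the place demanding care, is the bookkeeping in the Cauchy estimate: one must choose how to partition the tolerance $t$ among the infinitely many (for the limit) telescoped terms so that the weights still sum to $t$ while each piece is large enough to push $N(x,\cdot)$ close to $1$; this is exactly where the hypothesis $q>\tfrac12$ enters, guaranteeing $2-p>0$ so that the geometric series $\sum 2^{-k(2-p)}$ (or its partial-sum complement) converges and produces the clean closed-form constant $\tfrac{2^{2-p}-1}{4}$. A secondary nuisance is handling $f(0)$: one should first show $N'(f(0), t)$ is forced to be $1$ for all $t>0$ (so $f(0)=0$) by setting $x=y=0$ and letting the tolerance shrink, or otherwise carry the $f(0)$ term through the telescoping, where it is killed by the $4^{-n}$ factor.
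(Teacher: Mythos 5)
Your proposal follows essentially the same route as the paper: the direct method with $Q(x)=N'\text{-}\lim_n f(2^nx)/4^n$, a telescoping Cauchy estimate via (N4) driven by the convergence of the geometric series coming from $q>\tfrac12$ (i.e.\ $p<2$), quadraticity by passing the inequality at the pair $(2^nx,2^ny)$ to the limit, and uniqueness via $Q(2^nx)=4^nQ(x)$. The only point to repair is the appeal to ``continuity of $N'$'' in the quadraticity step --- $N'$ is only assumed non-decreasing in $t$, so you should instead split the tolerance $t$ into five pieces via (N4) as the paper does --- while your explicit treatment of $f(0)$ (forced to vanish by (N2)) is, if anything, more careful than the paper's silent omission of that term.
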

\begin{proof} Put $x=y$ and $s=t$ in (\ref{pq}) to obtain
\begin{equation}\label{mn}
N'( f( 2x ) - 4 f( x ),  2 t ) \geq N( x, t^q)  \qquad (x \in X,
t>0).
\end{equation}
Replacing $x$ by $2^n x$ in (\ref{mn}), we see that
\begin{equation}
N'(f(2^{n+1} x) - 4 f(2^n x), 2t)  \geq N(x, {t^q \over 2^n})
\qquad (x \in X, n \geq 0, t>0).
\end{equation}
It follows that
\begin{equation*}
N'(f(2^{n+1} x) - 4 f(2^n x), 2^{{n\over q}+1}t^{ 1\over q})  \geq
N(x, t) \qquad (x \in X, n \geq 0, t>0).
\end{equation*}
Whence
\begin{equation}
N'({ f( 2^{n+1} x ) \over 4^{n+1}} - {f ( 2^n x ) \over 4^n}, {
t^p 2^{n(p -2)+1}} ) \geq N ( x, t) \qquad (x \in X, n \geq 0,
t>0)
\end{equation}
where $p = {1 \over q}$. If $n > m \geq 0$, then
\begin{eqnarray}\label{Cau}
N'({f( 2^n x) \over 4^n} -  {f( 2^m x) \over 4^m}&,&
\sum^n_{k=m+1} t^p 2^{n(p -2)+1} )\nonumber\\ &\geq& N'(
\sum^n_{k=m+1}({f( 2^k x) \over 4^k}- {f( 2^{k-1}x) \over
4^{k-1}}), \sum^n_{k=m+1} t^p 2^{k(p -2)+1}) \nonumber\\ &\geq&
\min \bigcup_{k=m+1}^n \{ N'({f( 2^k x) \over
4^k} - {f( 2^{k-1} x) \over 4^{k-1}}, t^p 2^{k(p -2)+1})\\
&\geq& N(x, t) \qquad (x \in X, t>0)\nonumber.
\end{eqnarray}
Let $c >0$ and $\varepsilon$ be given. Since $\lim_{t \to \infty}
N(x, t) = 1$, there is some $t_0 > 0$ such that
$$N(x, t_0)\geq 1 - \varepsilon.$$
Fix some $t > t_0$. The convergence of the series
$\sum_{n=1}^\infty t^p 2^{n(p -2)+1}$ guarantees that  there
exists some $n_0 \geq 0$ such that for each $n > m \geq n_0$, the
inequality  $\sum_{k = m+1} ^n t^p 2^{k(p -2)+1} < c$ holds. It
follows that,
\begin{eqnarray*}
N'({f( 2^n x) \over 4^n} -  {f( 2^m x) \over 4^m}, c)
&\geq& N'({f( 2^n x) \over 4^n} -  {f( 2^m x) \over 4^m},\sum_{k = m+1}^n  t_0^p 2^{k(p -2)+1}) \\
&\geq&  N(x, t_0) \\ &\geq& 1 - \varepsilon .
\end{eqnarray*}
Hence $\{ { f(2^n x ) \over  4^n}\}$ is a Cauchy sequence in $(Y,
N') $. Since $(Y, N')$ is a fuzzy Banach space, this sequence
converges to some $Q(x) \in Y$. Hence, we can define a mapping
$Q:X \to Y$, by $Q(x) := N'-\lim_{n \to \infty} { f(2^n x ) \over
4^n }$. Moreover, if we put $m=0$ in (\ref{Cau}) we observe that
$$
N'({f( 2^n x) \over 4^n} - f(x), \sum_{k =1}^n  t^p 2^{k(p -2)+1})
\geq N( x, t).
$$
Therefore,
\begin{equation}\label{app}
N'({f( 2^n x) \over 4^n} - f(x), t ) \geq N( x, {t^q \over
(\sum_{k =1}^n   2^{k(p -2)+1})^q} ).
\end{equation}
Next we will show that $Q$ is quadratic. Let $x,y \in X$, then we
have
\begin{eqnarray*}
&&N'(Q(x+y) + Q(x-y) - 2 Q(x) - 2 Q(y), t )\geq \\
&\min& \{ N'(Q(x+y) - \frac{f(2^n(x+y))}{4^n}, \frac{t}{5}),
N'(T(x-y) - \frac{f(2^n(x-y))}{4^n}, \frac{t}{5}),\\&& N'( 2{f(2^n
x)\over 4^n}- 2 T(x), {t\over 5}), N'(2{f(2^ny)\over 4^n}- 2T(y),
{t\over 5}),\\&& N'({f(2^n(x+y))\over 4^n} - {f(2^n(x-y))\over
4^n} - 2{f(2^n(x))\over 4^n} - 2 {f(2^n(y))\over 4^n}, {t\over
5})\}.
\end{eqnarray*}
The first four terms on the right hand side of the above
inequality tend to $1$ as $ n \to \infty$ and the fifth term, by
(\ref{pq}) is greater than or equal to
$$\min \{ N(2^nx, (\frac{4^nt}{10})^q), N(2^ny, (\frac{4^nt}{10})^q\}=\min \{ N(x, 2^{(2q-1)n}(\frac{t}{10})^q),
N(y, 2^{(2q-1)n}(\frac{t}{10})^q)\},$$ which tends to $1$ as $n
\to \infty$. Therefore
$$N'(Q(x+y) + Q(x-y) -2 Q(x) - 2 Q(y),   t)=1$$
for each $x,y \in X $ and $t > 0$. This means that $Q(x+y) +
Q(x-y) = 2 Q(x) + 2 Q(y)$ for each $x, y \in X$. Next we
approximate the difference between $f$ and $Q$ in a fuzzy sense.
For every $x \in X$ and  $t, s > 0$, by (\ref{app}), for large
enough $n$, we have
\begin{eqnarray*}
N'(Q(x) - f(x), t) &\geq& \min \{ N'(Q(x) - { f(2^n x) \over 4^n},
\frac{t}{2}), N'({f(2^n x) \over 4^n} - f(x), \frac{t}{2})\}\\
&\geq& N( x, {t^q \over (\sum_{k =1}^n 2^{k(p -2)+2})^q})\\
&\geq& N(x, (\frac{2^{2-p}-1}{4})^q t^q).
\end{eqnarray*}
Let $Q'$ be another quadratic function from $X$ to $Y$ which
satisfies (\ref{Tfts}). Since for each $n \in \mathbb{N}$,
$$Q(2^n x) = 4^n Q(x) ~\& ~Q'(2^n x) = 4^n Q'(x),$$
we have
\begin{eqnarray*}
N'(Q(x) - Q'(x), t ) &=& N'(Q(2^n x) - Q'(2^nx), 4^n t)\\
&\geq& \min \{ N'(Q'(2^n x) - f(2^nx), \frac{4^n t}{2}),
N'(f(2^nx)- Q(2^n x)),\frac{4^n t}{2})\\ &\geq& N(2^n
x,(\frac{2^{2-p}-1}{4})^q 4^{(n- \frac{1}{2})q}t^q)\\ &=& N(x,
(\frac{2^{2-p}-1}{4})^q \frac{4^{nq}t^q}{2^n 2^q})
\end{eqnarray*} for each $n \in
\mathbb{N}$. Due to $q > \frac 1 2$,  $\lim_{n \to \infty} N(x,
(\frac{2^{2-p}-1}{4})^q \frac{4^{nq}t^q}{2^n 2^q})=1$ for each $x
\in X$ and $t > 0$. Therefore $Q = Q'$.
\end{proof}

\begin{remark}
If $N'(Q(x) - f(x), ~.~)$ is assumed to be right continuous at
each point of $(0, \infty)$ then we get a better fuzzy
approximation than (\ref{Tfts}) as follows.

We have
\begin{eqnarray*}
N'(Q(x) - f(x), t+s) &\geq& \min \{ N'(Q(x) - { f(2^n x) \over
4^n}, s), N'({f(2^n x) \over 4^n} - f(x), t)\}\\ &\geq& N( x,
{t^q \over (\sum_{k =1}^n 2^{k(p -2)+1})^q})\\ &\geq& N(x,
(\frac{2^{2-p}-1}{2})^q t^q).
\end{eqnarray*}
Tending $s$ to zero we infer
$$N'(Q(x) - f(x), t) \geq N(x, (\frac{2^{2-p}-1}{2})^q t^q)\qquad (x \in X, t >0).$$
\end{remark}
\begin{example} Let $X$ be a normed algebra. Using the notation of Example \ref{exam}, let $N= N_1$ and
$N^{'}= N_2$. Define $f:(X, N) \to (X, N^{'}) $ by $f(x) = x^2
+||x|| x_0$, where $x_0$ is a unit vector in $X$. A
straightforward computation shows that
$$f(x+y) + f(x-y) - 2 f(x) - 2 f(y) = (|| x+y || + || x - y || - 2
|| x || - 2 || y ||) x_0 $$ and
\begin{eqnarray*} N(f(x+y) +
f(x-y) - 2 f(x) - 2 f(y), s+t) \geq \min \{ N^{'} (x,t), N^{'}(y,
s) \}.
\end{eqnarray*}
Therefore the conditions of Theorem \ref{t1} for $q = 1$ holds.
The fuzzy difference between $Q(x) = \lim_{n \to \infty}\frac{
f(2^n x)}{ 4^n} = x^2 $ and $f(x)$ is equal to $$N(f(x) - Q(x), t
) = {t \over { t+ || x||}} = N(x,t) \geq N^{'}(x, t) \geq N^{'}(x,
{t\over 2}).
$$
\end{example}

Using Example \ref{examm}, Theorem \ref{t1} can be regarded as a
generalization of the classical stability result in the framework
of normed spaces (see \cite{H-I-R}).

\begin{theorem}
Let $f$ be a function from a  normed space $(X, \|.\|))$ into a
Banach space $(Y, |||.|||)$. Let for some $p > { 2}$,
\begin{equation}
|||(f(x+y) + f(x-y) - 2f(x) - 2f(y)||| \leq \|x\|^p + \|y\|^p
\end{equation}
for all $x, y \in X$. Then there is a unique quadratic additive
function $Q: X \to Y$ such that
\begin{equation}
|||T(x) - f(x)||| \leq  \frac{4}{2^{2-p}-1}||x||^p \qquad (x \in
X).
\end{equation}
\end{theorem}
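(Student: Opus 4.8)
The plan is to deduce this classical (Banach space) stability result as a direct corollary of the fuzzy Theorem~\ref{t1} by choosing the appropriate fuzzy norms on $X$ and $Y$, following the recipe of Example~\ref{examm}. First I would equip the domain $X$ with the fuzzy norm $N$ coming from $\|\cdot\|$ via Example~\ref{examm}, namely $N(x,t)=1$ if $t>\|x\|$ and $0$ otherwise; similarly equip $Y$ with the fuzzy norm $N'$ built from $\vertiii{\cdot}$. One checks that $(Y,N')$ is a fuzzy Banach space precisely because $(Y,\vertiii{\cdot})$ is a Banach space (convergence and Cauchyness in this crisp fuzzy norm coincide with the metric notions).

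The key step is to verify that the hypothesis $\vertiii{f(x+y)+f(x-y)-2f(x)-2f(y)}\le\|x\|^p+\|y\|^p$ translates exactly into the condition~\eqref{pq} of being a fuzzy $q$-almost quadratic function with $q=1/p$ (so that $q<1/2$... wait, $p>2$ gives $q=1/p<1/2$). Here I need to be a little careful: Theorem~\ref{t1} requires $q>1/2$, whereas $p>2$ forces $q=1/p<1/2$. So the corollary as stated cannot follow from Theorem~\ref{t1} directly with $q=1/p$; instead one should run the \emph{same} iteration argument as in the proof of Theorem~\ref{t1} but dividing by $4^n$ and using $p>2$ to get convergence of $\sum 2^{n(p-2)+1}$... no, for $p>2$ that series diverges. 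The correct iteration for $p>2$ is the \emph{reversed} one: from $N'(f(2x)-4f(x),2t)\ge N(x,t^q)$ one instead writes $f(x)=4f(x/2)+(\text{small})$, i.e. one defines $Q(x)=\lim 4^n f(x/2^n)$, and the relevant convergent series is $\sum 2^{n(2-p)}$, which converges since $p>2$. So the honest plan is: restate/invoke the ``$p>2$'' analogue of the iteration inside the proof of Theorem~\ref{t1}, specialized to the crisp fuzzy norms of Example~\ref{examm}.

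Concretely, I would (i) set $N,N'$ as above; (ii) translate the $\epsilon$-inequality into $N'(f(x+y)+f(x-y)-2f(x)-2f(y),\,\|x\|^p+\|y\|^p+\delta)=1$ for every $\delta>0$, hence into~\eqref{pq}-type bounds with $s=\|x\|^p$, $t=\|y\|^p$ and $q=1/p$; (iii) put $x=y$ to get $\vertiii{f(2x)-4f(x)}\le 2\|x\|^p$, i.e. $\vertiii{\tfrac{f(x)}{1}-4f(x/2)}\le 2\cdot 2^{-p}\|x\|^p$ after rescaling; (iv) iterate to show $\{4^n f(x/2^n)\}$ is Cauchy in $Y$ with limit $Q(x)$ satisfying $\vertiii{Q(x)-f(x)}\le \sum_{k\ge1} 2\cdot 2^{-kp}4^{k-1}\|x\|^p=\tfrac{2^{2-p}}{2(1-2^{2-p})}\|x\|^p$; one then simplifies this to $\tfrac{4}{2^{2-p}-1}\|x\|^p$ — actually to $\tfrac{2}{2^p-4}\|x\|^p$, and I would double-check the constant against the stated $\tfrac{4}{2^{2-p}-1}\|x\|^p$, reconciling any discrepancy by tracking the factor-of-$2$ from the right-continuity remark; (v) show $Q$ is quadratic by the usual limiting argument, bounding $\vertiii{Q(x+y)+Q(x-y)-2Q(x)-2Q(y)}\le 4^n(\|x/2^n\|^p+\|y/2^n\|^p)=2^{n(2-p)}(\|x\|^p+\|y\|^p)\to0$; and (vi) prove uniqueness: if $Q'$ also works, then $\vertiii{Q(x)-Q'(x)}=4^n\vertiii{Q(x/2^n)-Q'(x/2^n)}\le 4^n\cdot\tfrac{C}{2^{np}}\|x\|^p\to0$ since $p>2$.

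The main obstacle I anticipate is purely bookkeeping: getting the constant exactly right and correctly matching up the direction of iteration ($4^nf(x/2^n)$ for $p>2$, versus $f(2^nx)/4^n$ in Theorem~\ref{t1} for $q>1/2$, i.e. $p<2$). There is no serious analytic difficulty — completeness of $Y$ and the geometric-series estimates do all the work — but the statement as written uses the crisp fuzzy norm of Example~\ref{examm}, so I would phrase the proof as ``apply the argument of Theorem~\ref{t1} (in its $p>2$ form) with $N,N'$ as in Example~\ref{examm}'', and then translate each fuzzy inequality $N'(u,t)=1$ back into the crisp inequality $\vertiii{u}\le t$ to recover the displayed bound.
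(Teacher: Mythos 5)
Your proposal is mathematically sound, but it takes a genuinely different route from the one the paper intends, and in the process you have correctly diagnosed a defect in the statement itself. The paper gives no separate proof here: the theorem is meant to be read off from Theorem \ref{t1} by equipping $X$ and $Y$ with the crisp fuzzy norms of Example \ref{examm}, exactly as in your steps (i)--(ii). Under that dictionary the hypothesis becomes condition \eqref{pq} with $q=1/p$, and the conclusion \eqref{Tfts} unwinds (since $N(x,(\frac{2^{2-p}-1}{4})^q t^q)=1$ iff $t>\frac{4}{2^{2-p}-1}\|x\|^p$) to the displayed bound. As you observe, Theorem \ref{t1} requires $q>\frac12$, i.e. $p<2$; the printed ``$p>2$'' is a typo for ``$p<2$'' (for $p>2$ the printed constant $\frac{4}{2^{2-p}-1}$ is negative, so the statement as written cannot hold; ``$T(x)$'' and ``quadratic additive'' are further slips). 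Your alternative --- running the dual iteration with $4^nf(x/2^n)$, which is exactly what the remark following this theorem alludes to --- is a correct proof of a correctly restated $p>2$ version, but it yields the constant $\frac{2}{2^p-4}\|x\|^p$, not the printed one, and it reproves from scratch what the paper wants to obtain as a one-line specialization of the fuzzy theorem. So the two approaches buy different things: the paper's route is immediate but only covers $p<2$; yours covers $p>2$ at the cost of redoing the iteration outside the fuzzy framework. You should commit to one repair rather than hedging: either change the hypothesis to $p<2$ and simply cite Theorem \ref{t1} with the crisp norms (your steps (iii)--(vi) then become redundant), or keep $p>2$, replace the constant by $\frac{2}{2^p-4}$, and write out the dual iteration in full. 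The one point I would press you on is the constant: you leave it as ``reconcile any discrepancy,'' but no bookkeeping reconciles a nonnegative norm with a negative upper bound, so the discrepancy is a genuine error in the source, not in your computation.
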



\begin{remark}
Using the Hyers' type sequence $\{4^nf(2^{-n}x)\}$ one can get
`dual' versions of Theorem \ref{t1} when $q < \frac{1}{2}$.
\end{remark}
\section{A fuzzy general stability of the Pexiderized quadratic equation}

In this section, we generalize the norm version of stability of a
Pexiderized quadratic equation to the framework of fuzzy normed
spaces. Due to some technical reasons, we first examine the
stability for odd and even functions and then we apply our results
to a general function.

Throughout this section we assume that $X$ is a linear space, $(Y,
N)$ is a fuzzy Banach space and $( Z, N^{'})$ is a fuzzy normed
space. In addition, we suppose that $\varphi : X \times X \to Z$
is a mapping such that \begin{eqnarray}\label{p2al} \varphi(2x,
2y) = \alpha \varphi(x, y) \end{eqnarray} for some $\alpha \in
{\mathbb R}$ and all $x, y \in X$.

\begin{proposition}\label{p1}
Suppose that $0< |\alpha |< 2$ and that $f, g$ and $h$ are odd
functions from $X$ to $Y$ such that
\begin{eqnarray} \label{001}
N( f(x+y) + f(x-y) -2 g(x) - 2h(y), t) \geq N'(\varphi(x,y), t)
\end{eqnarray}
for each $x, y \in X$ and $t \in \mathbb{R}$. Then there is a
unique additive mapping $T: X \to X$ such that
\begin{eqnarray*}
N(f(x)-T(x), t) \geq N''(x, \frac{2-|\alpha|}{4}t),
\end{eqnarray*}
\begin{eqnarray}\label{ghT}
N(g(x)+h(x)-T(x), t) \geq N''(x,
\frac{6-3|\alpha|}{14-|\alpha|}t),
\end{eqnarray}
where
\begin{eqnarray}\label{106}
N^{''}(x, t) =  \min \{ N'(\varphi(x,x), t/3), N'(\varphi(x,0),
t/3), N'(\varphi(0,x), t/3)\}.
\end{eqnarray}
\end{proposition}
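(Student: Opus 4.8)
The plan is to mimic the proof of Theorem \ref{t1}, but working with the odd-function hypothesis so that the "quadratic part" collapses and an additive map emerges via the Hyers sequence $\{2^{-n}f(2^nx)\}$. First I would exploit oddness to symmetrize \eqref{001}. Replacing $(x,y)$ by $(x,-y)$ in \eqref{001} and using $f(x-y)+f(x+y)-2g(x)+2h(y)$ together with the original inequality, then swapping the roles of the two arguments, I extract from the single inequality both a bound controlling $f(x+y)+f(x-y)-4g(x)$-type expressions and a bound for the pure Cauchy difference of $f$. Concretely, setting $y=x$ and $y=0$ in \eqref{001} and combining (this is exactly what the three terms in the definition \eqref{106} of $N''$ are designed to absorb), I aim to derive an estimate of the form
\begin{equation*}
N\bigl(f(2x)-2f(x),\, t\bigr)\ \geq\ N''\!\left(x,\tfrac{t}{c_0}\right)
\end{equation*}
for a suitable absolute constant $c_0$, where $N''$ is the min of the three $\varphi$-values. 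The oddness is what kills the $f(x-y)=-f(y-x)$ cross terms and turns the quadratic-type relation into an additive-type one.

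Next I would iterate: replace $x$ by $2^nx$, use the scaling hypothesis \eqref{p2al} $\varphi(2x,2y)=\alpha\varphi(x,y)$ (hence $N'(\varphi(2^nx,2^ny),t)=N'(\varphi(x,y),t/|\alpha|^n)$ via (N3)), and divide by $2^{n+1}$ to get
\begin{equation*}
N\!\left(\frac{f(2^{n+1}x)}{2^{n+1}}-\frac{f(2^nx)}{2^n},\, t\right)\ \geq\ N''\!\left(x,\ \frac{2^{n+1}}{c_0|\alpha|^n}\,t\right).
\end{equation*}
Because $|\alpha|<2$, the geometric series $\sum_n (|\alpha|/2)^n$ converges, so the telescoping argument of Theorem \ref{t1} — using (N4) to bound the norm of a finite sum by the min of the summands, then the convergence of the series plus $\lim_{t\to\infty}N''(x,t)=1$ — shows $\{2^{-n}f(2^nx)\}$ is Cauchy. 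Completeness of $(Y,N)$ gives $T(x):=N\text{-}\lim 2^{-n}f(2^nx)$, and the same summation with $m=0$ yields the explicit estimate
\begin{equation*}
N\!\left(\frac{f(2^nx)}{2^n}-f(x),\, t\right)\ \geq\ N''\!\left(x,\ \frac{t}{\sum_{k=1}^n c_0\,|\alpha|^{k-1}2^{-k+1}}\right),
\end{equation*}
which in the limit produces $N(f(x)-T(x),t)\geq N''(x,\frac{2-|\alpha|}{4}t)$ once the constant $c_0$ is tracked correctly (here $c_0=4$, matching the claimed factor).

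That $T$ is additive follows as in Theorem \ref{t1}: write $T(x+y)-T(x)-T(y)$ as a sum of several differences between $T$ and the approximants $2^{-n}f(2^n\cdot)$ plus one genuine instance of the defining inequality evaluated at $2^nx,2^ny$; the approximant terms $\to 1$ by construction, and the defining term is $\geq N'(\varphi(2^nx,2^ny),\,2^n t/c)=N'(\varphi(x,y),\,2^n t/(c|\alpha|^n))\to 1$ since $|\alpha|<2$. Uniqueness is the standard argument: a second additive $T'$ satisfying the bound obeys $N(T(x)-T'(x),t)=N(2^{-n}(T(2^nx)-T'(2^nx)),t)\geq N''(x,\,2^{n}t\cdot\text{const}/|\alpha|^{?})$... — I would instead bound $T-T'$ through $f$ at $2^nx$, getting a factor $(2/|\alpha|)^n\to\infty$ in the argument of $N''$, forcing $N(T-T',t)=1$. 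Finally, for the $g,h$ estimate \eqref{ghT}: put $y=0$ in \eqref{001} to relate $f(x)$ to $g(x)$ (using $h(0)=0$ by oddness) and $x=0$ to relate $f(y)$ to $h(y)$, add the two, and combine with the already-established $N(f(x)-T(x),t)$ bound; the slightly awkward constant $\frac{6-3|\alpha|}{14-|\alpha|}$ comes out of optimizing the split of $t$ across these pieces. The main obstacle I anticipate is bookkeeping: getting the right absolute constant $c_0=4$ in the basic one-step estimate from the odd-symmetrization (the choice of how to split $t$ among the three $\varphi$-terms in $N''$ and among the auxiliary substitutions is delicate), and then propagating it cleanly through the series sum $\sum_{k\geq 1}(|\alpha|/2)^{k-1}2^{-1}\cdot\frac14=\frac{1}{4}\cdot\frac{1}{2-|\alpha|}$ to land exactly on $\frac{2-|\alpha|}{4}$.
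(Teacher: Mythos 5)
Your proposal follows essentially the same route as the paper's proof: symmetrize (\ref{001}) using oddness, extract a one-step estimate for $f(2x)-2f(x)$ in terms of $N''$, telescope the Hyers sequence $\{2^{-n}f(2^nx)\}$ via (N4) and the convergence of $\sum(|\alpha|/2)^k$, and then obtain additivity, the error bound, uniqueness, and the $g+h$ estimate exactly as you describe. One correction to your bookkeeping: the one-step estimate is $N(f(2x)-2f(x),t)\geq N''(x,t)$, i.e.\ $c_0=1$ rather than $4$ --- the factor $3$ incurred by splitting $t$ among the three substitutions $y=x$, $y=0$, $x=0$ is exactly absorbed by the $t/3$ built into the definition (\ref{106}) of $N''$ --- and with your $c_0=4$ the series sum would land on $\tfrac{2-|\alpha|}{16}$ instead of the stated $\tfrac{2-|\alpha|}{4}$.
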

\begin{proof} Noting to (N3), it is sufficient to prove the theorem in the case that $0 < \alpha <2$.
By changing the roles of $x$ and $y$ in (\ref{001}),  we get
\begin{eqnarray}\label{002}
N( f(x+y) - f(x-y) -2 g(y) - 2h(x), t) \geq N'(\varphi(y,x), t).
\end{eqnarray}
It follows from (\ref{001}), (\ref{002}) and (N4) that
\begin{eqnarray}\label{003}
N( f(x+y)  - g(x) - h(y) - g(y) - h(x), t) &\geq&\nonumber \\
&\min& \{ N'(\varphi(x,y), t), N'(\varphi(y,x), t)\}.
\end{eqnarray}
If we put $y = 0$ in (\ref{003}), we obtain
\begin{eqnarray}\label{004}
N( f(x)  - g(x) -  h(x), t) \geq \min \{ N'(\varphi(x,0), t),
N'(\varphi(0,x), t)\}.
\end{eqnarray}
It follows from ( \ref{003}), ( \ref{004}) and (N4)
\begin{eqnarray} \label{005}
N( f(x+y)  - f(x) - f(y), 3t) \geq  \min &&\{ N'(\varphi(x,y), t),
N'(\varphi(y,x), t),\nonumber \\ && N'(\varphi(x,0), t),
N'(\varphi(0,x), t), \\ && N'(\varphi(y, 0), t), N'(\varphi(0, y),
t)\}\nonumber.
\end{eqnarray} Then
$N^{''}(2^n x, t ) = N^{''}( x, \frac{t}{\alpha^n})$. If we put
$x=y$ in ( \ref{005}), we see that
\begin{eqnarray}\label{006}
N( f(2x) - 2f(x), t) \geq N^{''}(x,t).
\end{eqnarray}
Replacing $x$ by $2^n x$ in (\ref{006}) we have
\begin{eqnarray*}
N( {f(2^{n+1}x)\over 2^{n+1}} - {f(2^n x)\over 2^n}, t) &=& N(
f(2^{n+1}x) - f(2^n x), 2^nt)\\& \geq& N^{''}(2^n x, 2^n t)\\
&\geq&N^{''}(x, (\frac{2}{\alpha})^nt),
\end{eqnarray*}
whence
\begin{eqnarray*}
N({f(2^{n+1}x)\over 2^{n+1}} - {f(2^n x)\over
2^n},(\frac{\alpha}{2})^nt) \geq N{''}(x, t).
\end{eqnarray*}
Therefore for each $n > m \geq 0$,
\begin{eqnarray}\label{009}
N( {f(2^{n}x)\over 2^{n}} - {f(2^m x)\over 2^m}, \sum_{k=m+1}^n
(\frac{\alpha}{2})^{k-1}t) &=& N ( \sum_{k=m+1}^n {f(2^{k}x)\over
2^{k}} - {f(2^{k-1} x)\over 2^{k-1}}, \sum_{k=m+1}^n
(\frac{\alpha}{2})^{k-1}t) \nonumber \\ &\geq& \min
\bigcup_{k=m+1}^n \{ N({f(2^{k}x)\over 2^{k}} - {f(2^{k-1} x)\over
2^{k-1}}, (\frac{\alpha}{2})^{k-1}t) \}\\ &\geq& N{''}(x,
t).\nonumber
\end{eqnarray}
Let $t_0 > 0$ and $\varepsilon >0$ be given. Thanks to the fact
that $\lim_{s \to \infty} N^{''}(x, s) = 1$, we can find some
$t_1 > t_0$ such that
\begin{eqnarray*}
N^{''}(x, t_1) > 1 - \varepsilon. \end{eqnarray*} By the
convergence of the series $\sum^\infty_{n=1}(\frac{\alpha}{2})^n
t_1$ we can find  some $n_0 \in \mathbb{N}$ such that for each $n
> m \geq n_0$,
\begin{eqnarray*}
\sum_{k=m+1}^n (\frac{\alpha}{2})^{k-1}t_1 < t_0.
\end{eqnarray*}
Therefore
\begin{eqnarray*}
N( {f(2^{n}x)\over 2^{n}} - {f(2^m x)\over 2^m}, t_0 ) &\geq& N(
{f(2^{n}x)\over 2^{n}} - {f(2^m x)\over 2^m},
\sum_{k=m+1}^n(\frac{\alpha}{2})^{k-1}t_1)
\\ &\geq& N^{''}(x, t_1) \\
&>& 1 - \varepsilon.
\end{eqnarray*}
Hence, $\{ { f(2^n x ) \over 2^n} \}$ is a Cauchy sequence in $(Y,
N)$. Since $(Y, N)$ is a Banach fuzzy space, this sequence
converges to some point $T(x) \in Y$. Define $T: X \to Y$ by
$T(x):= N- \lim_{n\to \infty} { f( 2^n x) \over 2^n}.$

Fix $x, y \in X$ and $t>0$. It follows from (\ref{005}) that
\begin{eqnarray}\label{013}
N({f(2^n(x+y))\over 2^n} - {f(2^n x)\over 2^n} - {f(2^n y)\over
2^n}, \frac{t}{4})  &=& N ( f(2^n(x+y)) - f(2^n x) - f(2^n y), \frac{2^n t}{4} ) \nonumber \\
&\geq& \min \{ N'( \varphi(x, y ), {2^n t\over 12 \alpha^n}), N'(
\varphi( y, x ), {2^n t\over 12 \alpha^n}), \\&& N'( \varphi(x, 0
), {2^n t\over 12 \alpha^n}),  N'( \varphi(0, x ), {2^n t\over 12
\alpha^n}), \nonumber \\ && N'( \varphi(y, 0 ), {2^n t\over 12
\alpha^n }),  N'( \varphi(0, y ), {2^n t\over 12
\alpha^n})\}\nonumber
\end{eqnarray}
for all $n$. Moreover,
\begin{eqnarray}\label{014}
N(T(x+y) - T(x) - T(y), t ) &\geq& \min\{ N( T(x+y) - {
f(2^n(x+y)) \over 2^n}, {t\over 4} ),\nonumber \\&& N(T(x) -
{f(2^n x)\over 2^n}, {t\over 4}), N(T(y) - {f(2^n y)\over 2^n},
{t\over 4}), \\&& N({f(2^n(x+y))\over 2^n} - {f(2^n x)\over 2^n}
- {f(2^n y)\over 2^n}, {t\over 4})\} \nonumber
\end{eqnarray}
for all $n$. Since each factor in the right hand side of
(\ref{013}) and (\ref{014}) tends to $1$ as $n \to \infty$, one
can easily see that $N( T(x+y) - T(x) - T(y), t )=1$ whence
$$T(x+y)= T(x) + T(y).$$
Furthermore, using (\ref{009}) with $m = 0$, we see that for large
$n$,
\begin{eqnarray}\label{015}
N( T(x) - f(x), t) &\geq& \min \{ N(T(x) - {f(2^n x)\over 2^n},
{t\over 2}), N( {f(2^n x)\over 2^n} - f(x), {t \over 2}) \}\nonumber \\
&\geq& \min \{ N(T(x) - {f(2^n x)\over 2^n}, {t\over 2}),
N^{''}(x, {t \over 2 \sum^n_{k=1}({\alpha \over 2})^{k-1}})\}\\
&\geq& N^{''}(x, {t \over 2 \sum^\infty_{k=0}( {\alpha \over
2})^{k-1}})\nonumber\\
&=& N^{''}(x, \frac{2-\alpha}{4}t).\nonumber
\end{eqnarray}
It follows from (\ref{004}) and (\ref{015}) that
\begin{eqnarray*}
N(g(x)+h(x)-T(x)&,& \frac{14-\alpha}{12}t)\\&\geq& \min\{N(f(x) -
T(x),t), N(g(x)+h(x)-f(x), \frac{2-\alpha}{12}t) \}\\&\geq&
\min\{N^{''}(x, \frac{2-\alpha}{4}t),  N'(\varphi(x,0),
\frac{2-\alpha}{12}t), N'(\varphi(0,x), \frac{2-\alpha}{12}t)\}\\
&\geq& N^{''}(x, \frac{2-\alpha}{4}t),
\end{eqnarray*}
whence we obtained (\ref{ghT}).

The proof for the  uniqueness assertion is similar to  Theorem
\ref{t1}.
\end{proof}
\begin{proposition}\label{p2}
Suppose that $0 < |\alpha | < 4 $, and that $f, g$ and $h$ are
even functions from $X$ to $Y$ such that $f(0)=g(0)= h(0)=0$ and
\begin{eqnarray} \label{101}
N( f(x+y) + f(x-y) -2 g(x) - 2h(y), t) \geq N'(\varphi(x,y), t)
\end{eqnarray}
for each $x, y \in X$ and $t \in \mathbb{R}$. Then there is a
unique quadratic mapping $Q: X \to Y$ such that
\begin{eqnarray*}
N(Q(x) - f(x), t) \geq N^{''}(x, \frac{4- |\alpha |}{16}t),
\end{eqnarray*}
\begin{eqnarray*}
N(Q(x) - g(x), t) \geq N^{''}(x, \frac{12-3 |\alpha |}{52-
|\alpha |}t),
\end{eqnarray*}
\begin{eqnarray*}
N(Q(x) - h(x), t) \geq N^{''}(x, \frac{12-3 |\alpha |}{52-
|\alpha |}t),
\end{eqnarray*}
where $N^{''}(x, t)$ is defined by (\ref{106}).
\end{proposition}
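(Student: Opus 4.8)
The plan is to imitate the proofs of Theorem~\ref{t1} and Proposition~\ref{p1}, but to use the \emph{quadratic} Hyers sequence $\{f(2^nx)/4^n\}$ (the scaling appropriate to a quadratic map) in place of $\{f(2^nx)/2^n\}$. As in Proposition~\ref{p1}, it suffices to treat the case $0<\alpha<4$, since (N3) lets one replace $\alpha$ by $|\alpha|$ throughout.

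First I would extract from (\ref{101}) a workable ``quadratic Cauchy difference'' inequality for $f$ alone. Interchanging $x$ and $y$ in (\ref{101}) and using evenness of $f$ gives $N(f(x+y)+f(x-y)-2g(y)-2h(x),t)\ge N'(\varphi(y,x),t)$. Putting $y=0$, and then $x=0$, in (\ref{101}), and using $f(0)=g(0)=h(0)=0$ together with evenness, yields $N(f(x)-g(x),t)\ge N'(\varphi(x,0),2t)$ and $N(f(x)-h(x),t)\ge N'(\varphi(0,x),2t)$. Substituting these into (\ref{101}) via (N4) produces
\[
N\bigl(f(x+y)+f(x-y)-2f(x)-2f(y),\,3t\bigr)\ \ge\ \min\{N'(\varphi(x,y),t),\,N'(\varphi(x,0),t),\,N'(\varphi(0,y),t)\},
\]
and in particular, setting $x=y$ and using $f(0)=0$ together with the definition (\ref{106}),
\[
N\bigl(f(2x)-4f(x),t\bigr)\ \ge\ N''(x,t)\qquad(x\in X,\ t>0),
\]
the exact analogue of (\ref{006}) with $4$ in place of $2$.

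Next I would run the iteration exactly as in Proposition~\ref{p1}. Replacing $x$ by $2^nx$, using $\varphi(2x,2y)=\alpha\varphi(x,y)$ (hence $\varphi(2^nx,2^nx)=\alpha^n\varphi(x,x)$, $\varphi(2^nx,0)=\alpha^n\varphi(x,0)$, $\varphi(0,2^nx)=\alpha^n\varphi(0,x)$, so that $N''(2^nx,t)=N''(x,t/\alpha^n)$), and dividing by $4^{n+1}$, one bounds $N\bigl(f(2^{n+1}x)/4^{n+1}-f(2^nx)/4^n,\,c\,(\alpha/4)^{n}t\bigr)$ below by $N''(x,t)$. Telescoping over $m<k\le n$ and applying (N4) bounds $N\bigl(f(2^nx)/4^n-f(2^mx)/4^m,\,\sum_{k=m+1}^{n}c\,(\alpha/4)^{k-1}t\bigr)$ below by $N''(x,t)$; since $0<\alpha/4<1$ the geometric series converges, so the same $\varepsilon$--$t_0$ argument as in Theorem~\ref{t1} shows $\{f(2^nx)/4^n\}$ is Cauchy in the fuzzy Banach space $(Y,N)$, and one defines $Q(x):=N\text{-}\lim_n f(2^nx)/4^n$. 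That $Q$ is quadratic follows by replacing $(x,y)$ by $(2^nx,2^ny)$ in the displayed ``quadratic Cauchy difference'' inequality and dividing by $4^n$: the $\varphi$-terms then carry a factor $(4/\alpha)^n\to\infty$, hence tend to $1$, and together with $f(2^n\cdot)/4^n\to Q$ and (N4) this forces $N(Q(x+y)+Q(x-y)-2Q(x)-2Q(y),t)=1$ for all $t>0$. Taking $m=0$ in the telescoped inequality, splitting $Q(x)-f(x)$ through $f(2^nx)/4^n$ with (N4), and letting $n\to\infty$ gives the estimate $N(Q(x)-f(x),t)\ge N''(x,\tfrac{4-\alpha}{16}t)$; combining it with $N(f(x)-g(x),t)\ge N'(\varphi(x,0),2t)\ge N''(x,6t)$ (and symmetrically for $h$), then optimizing the split of $t$ exactly as in the derivation of (\ref{ghT}), yields the two asserted bounds for $N(Q(x)-g(x),t)$ and $N(Q(x)-h(x),t)$. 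Uniqueness is as in Theorem~\ref{t1}: if $Q'$ is another such quadratic map, then $Q(2^nx)=4^nQ(x)$, $Q'(2^nx)=4^nQ'(x)$, and the two approximation bounds give $N(Q(x)-Q'(x),t)\ge N''(x,\,\text{const}\cdot(4/\alpha)^nt)\to1$ as $n\to\infty$, precisely because $|\alpha|<4$; hence $Q=Q'$.

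The only genuinely delicate point is the constant bookkeeping: one must track the geometric-series sums $\sum c\,(\alpha/4)^{k-1}$ and the conversions $N'(\varphi(\cdot,\cdot),s)\ge N''(\cdot,3s)$ carefully to land on the precise coefficients $\tfrac{4-|\alpha|}{16}$ and $\tfrac{12-3|\alpha|}{52-|\alpha|}$. Conceptually, the essential change from Proposition~\ref{p1} is that the quadratic scaling $4^n$ replaces $2^n$, so convergence of the Hyers sequence and the uniqueness argument both hinge on $|\alpha|/4<1$ rather than $|\alpha|/2<1$, which is exactly why the hypothesis is relaxed from $|\alpha|<2$ to $|\alpha|<4$; the assumptions $f(0)=g(0)=h(0)=0$ are what make the $y=0$ and $x=0$ substitutions clean, since even functions need not vanish at the origin.
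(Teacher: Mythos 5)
Your proposal is correct and follows essentially the same route as the paper: reduce to $0<\alpha<4$ via (N3), use the $x=0$ and $y=0$ substitutions (with $f(0)=g(0)=h(0)=0$) to replace $g$ and $h$ by $f$ and obtain $N(f(2x)-4f(x),t)\geq N''(x,t)$, run the Hyers iteration on $\{f(2^nx)/4^n\}$ with the geometric series $\sum(\alpha/4)^{k-1}$, and then recombine with the $f$--$g$ and $f$--$h$ estimates to get the stated constants. The only cosmetic difference is that you substitute the $g\approx f$, $h\approx f$ bounds directly into (\ref{101}) rather than citing the paper's chain (\ref{102})--(\ref{104}), which amounts to the same inequality (\ref{quadn}).
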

\begin{proof} Noting to (N3), it is sufficient to prove the theorem in the case that $0 < \alpha <$. Change the roles of $x$ and $y$ in (\ref{101}) to get
\begin{eqnarray}\label{102}
N( f(x+y) + f(x-y) -2 g(y) - 2h(x), t) \geq N'(\varphi(y, x), t).
\end{eqnarray}
Put $y=x$ in (\ref{101}) to obtain
\begin{eqnarray}
N(f(2x) - 2g(x) - 2 h(x), t )\geq N'(\varphi(x,x), t).
\end{eqnarray}
Put $x=0$ in (\ref{101}) to obtain
\begin{eqnarray}\label{103}
N( 2f(y) - 2 h(y), t) \geq N'(\varphi(0,y), t).
\end{eqnarray}
Similarly, putting $y=0$ in (\ref{101}) we get
\begin{eqnarray}\label{104}
N( 2 f(x) -2 g(x), t) \geq N'(\varphi(x,0), t).
\end{eqnarray}
Combining (\ref{102}), (\ref{103}), (\ref{104}) we get
\begin{eqnarray}\label{quadn}
N(f(x+y) - f(x-y) - 2f(x) - 2f(y), t ) &\geq& \min \{
N'(\varphi(x,y), t/3),\nonumber \\&& N'(\varphi(x,0), t/3),
N'(\varphi(0,y), t/3)\}.
\end{eqnarray}
Setting $y=x$ in (\ref{quadn}) we have
\begin{eqnarray}\label{105}
N(f( 2x ) - 4 f(x), t ) \geq N^{''}(x, t),
\end{eqnarray}
where $N^{''}(x, t)$ is defined by
\begin{eqnarray*}
N^{''}(x, t) =  \min \{ N'(\varphi(x,x), t/3), N'(\varphi(x,0),
t/3), N'(\varphi(0,x), t/3)\}.
\end{eqnarray*}
By (\ref{p2al}),
\begin{eqnarray}\label{107}
N^{''}(2^n x, t) = N^{''}(x, {t\over  \alpha^n}),
\end{eqnarray}
for each $n \geq 0$ and $x \in X$. It follows from (\ref{105}) and
(\ref{107}) that
\begin{eqnarray}\label{108}
N(f( 2^{n+1}x ) - 4 f(2^n x), t ) \geq N^{''}(x, {t\over
 \alpha^n}).
\end{eqnarray}
By (\ref{108}),
\begin{eqnarray*}
N({f( 2^{n+1}x )\over 4^{n+1}} - {f(2^{n} x)\over 4^n}, t ) &=&
N(f( 2^{n+1}x ) - 4 f(2^n x), 4^{n+1} t) \\ &\geq& N^{''}(x,
{{4^{n+1}t\over  \alpha^n}}).
\end{eqnarray*}
or equivalently, \begin{eqnarray*} N({f( 2^{n+1}x )\over 4^{n+1}}
- {f(2^{n} x)\over 4^n}, { \alpha^n t\over 4^{n+1}} ) \geq
N^{''}(x, t).
\end{eqnarray*}
Therefore for each $n > m \geq 0$,
\begin{eqnarray}\label{110}
N({f( 2^n x )\over 4^n } - {f(2^m x)\over 4^m}, \sum_{k=m+1}^n {
\alpha^{k - 1} t\over 4^{k}} ) &=&   N( \sum_{k=m+1}^n ({f( 2^k x
)\over 4^k } - {f(2^{k-1} x)\over 4^{k-1}}), \sum_{k=m+1}^n {
\alpha^{k - 1} t\over 4^{k}} ) \nonumber \\ &\geq & \min
\bigcup_{k=m+1}^n \{N {f( 2^{k}x )\over 4^{k}} - {f(2^{k-1}
x)\over 4^{k-1}}, { \alpha^{k - 1} t\over 4^{k}} )\} \\ &\geq&
N^{''}(x, t).\nonumber
\end{eqnarray}
Let $t_0 > 0$, $\varepsilon >0$ be given. Since $\lim_{t \to
\infty}N^{''}(x, t) = 1$ there is some $t_1 > t_0$ such that
$N^{''}(x, t_1 ) > 1 - \varepsilon$. The convergence of the series
$\sum_{k=1}^\infty { \alpha^{k - 1} \over 4^{k}}t_1$ gives some
$n_0$ such that $\sum_{k=m+1}^n { \alpha^{k - 1} \over 4^{k}}t_1 <
t_0$ for each $n > m \geq n_0$. It follows that for each $n > m >
n_0$,
\begin{eqnarray*}
N({f( 2^n x )\over 4^n } - {f(2^m x)\over 4^m}, t_0) &\geq&  N({f(
2^n x )\over 4^n } - {f(2^m x)\over 4^m}, \sum_{k=m+1}^n
{\alpha^{k - 1} \over 4^{k}}t_1)
\\ &\geq& N^{''}(x, t_0) \\ &>& 1 - \varepsilon.
\end{eqnarray*}
This shows that $\{ {f( 2^n x )\over 4^n }\}$ is Cauchy sequence
in the fuzzy Banach space $(Y, N)$, therefore it is convergence
to some $Q(x)$. So we can define a mapping $Q: X \to Y$ by
$Q(x)\colon= N-\lim_{n\to\infty}\frac{f(2^nx)}{4^n}$.

Fix $x, y \in X$ and $t>0$. It follows from (\ref{quadn}) that
\begin{eqnarray}\label{113}
N({f(2^n(x+y))\over 4^n} + {f(2^n(x-y))\over 4^n}&-& 2{f(2^n
x)\over
4^n} - 2{f(2^n y)\over 4^n}, \frac{t}{5})\\&=& N ( f(2^n(x+y)) + f(2^n(x-y))\nonumber\\
&&- 2f(2^n x) - 2f(2^n y), \frac{4^n t}{5})\nonumber \\
&\geq& \min \{ N'( \varphi(x, y ), {4^n t\over 15
\alpha^n}),\nonumber
\\&& N'( \varphi(x, 0 ), {4^n t\over 15  \alpha^n}),  N'(
\varphi(0, y ), {4^n t\over 15  \alpha^n})\}\nonumber
\end{eqnarray}
for all $n$. Moreover,
\begin{eqnarray}\label{114}
N(Q(x+y) &+& Q(x-y)- 2Q(x) - 2Q(y), t ) \geq \min\{ N(Q(x+y) - {
f(2^n(x+y)) \over 4^n}, {t\over 5} ),\nonumber \\&&  N(Q(x-y) - {
f(2^n(x-y)) \over 4^n}, {t\over 5} ), N(2Q(x) - 2{f(2^n x)\over
4^n}, {t\over 5}), \nonumber\\&& N(2Q(y) - 2{f(2^n y)\over 4^n},
{t\over 5}),
\\&&N({f(2^n(x+y))\over 4^n} + {f(2^n(x-y))\over 4^n} - 2{f(2^n x)\over
4^n} - 2{f(2^n y)\over 4^n}, {t\over 5})\} \nonumber
\end{eqnarray}
for all $n$. Since each factor in the right hand side of
(\ref{113}) and (\ref{114}) tends to $1$ as $n \to \infty$, one
can easily see that $N(Q(x+y) + Q(x-y) - 2Q(x) - 2Q(y), t)=1$
whence
$$Q(x+y) + Q(x-y) = 2Q(x) + 2Q(y).$$
Furthermore, using (\ref{110}) with $m = 0$, we see that for large
$n$,
\begin{eqnarray}\label{Qft}
N(Q(x) - f(x), t) &\geq& \min \{ N(Q(x) - {f(2^n x)\over 4^n},
{t\over 2}), N( {f(2^n x)\over 4^n} - f(x), {t \over 2}) \}\nonumber\\
&\geq& \min \{ N(Q(x) - {f(2^n x)\over 4^n}, {t\over 2}),
N^{''}(x, {4t \over  \sum^n_{k=1}({ \alpha \over 4})^{k-1}})\}\nonumber\\
&\geq& N^{''}(x, {4t \over \sum^\infty_{k=0}( { \alpha \over
4})^{k}})\\
&=& N^{''}(x, \frac{4- \alpha}{16}t).
\end{eqnarray}
It follows from (\ref{104}) and (\ref{Qft}) that
\begin{eqnarray*}
N(Q(x) - g(x), \frac{52- \alpha}{48}t) &\geq&
\min\{N(Q(x)-f(x), t), N(f(x)-g(x),\frac{4- \alpha}{48}t)\}\\
&\geq& \min\{ N^{''}(x, \frac{4- \alpha}{16}t), N'(\varphi(x,0),
\frac{4- \alpha}{48}t)\}\\&\geq& N^{''}(x, \frac{4- \alpha}{16}t)
\end{eqnarray*}
whence
\begin{eqnarray*}
N(Q(x) - g(x), t) \geq N^{''}(x, \frac{12-3 \alpha}{52- \alpha}t).
\end{eqnarray*}
A similar inequality holds for $h$. The uniqueness assertion can
be proved by a known strategy as in Theorem \ref{t1}.
\end{proof}

\begin{theorem}
Let $ |\alpha | < 2 $, let $f$ be a mapping from $X$ to $Y$ such
that $f(0)=0$ and
\begin{eqnarray} \label{201}
N( f(x+y) + f(x-y) -2 f(x) - 2f(y), t) \geq N'(\varphi(x,y), t)
\end{eqnarray}
for all $x, y \in X$ and all $t >0$. Then there are unique
mappings $T$ and $Q$ from $X$ to $Y$ such that $T$ is additive,
$Q$ is quadratic and
\begin{eqnarray*}
N(f(x)-T(x)-Q(x), t) \geq M(x, \min\{ \frac{2-\alpha}{8},
\frac{4-\alpha}{32}\}\, t)\quad (x \in X, t>0),
\end{eqnarray*}
where
\begin{eqnarray*} M(x, t) &=&  \min \{ N'(\varphi(x,x),{t/ 3}),  N'(\varphi(-x, -x), {t/ 3})
, \\&& N'(\varphi(x,0), t/3), N'(\varphi(0,x), t/3),\\&&
N'(\varphi(-x,0), t/3), N'(\varphi(0,-x), t/3)\}.
\end{eqnarray*}
\end{theorem}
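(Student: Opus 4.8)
The plan is to reduce the statement to Propositions~\ref{p1} and~\ref{p2} by splitting $f$ into its odd and even parts. Put
$$f_{o}(x)=\tfrac12\bigl(f(x)-f(-x)\bigr),\qquad f_{e}(x)=\tfrac12\bigl(f(x)+f(-x)\bigr),$$
so that $f=f_{o}+f_{e}$, the map $f_{o}$ is odd, $f_{e}$ is even, and $f_{o}(0)=f_{e}(0)=0$ since $f(0)=0$. As in the two propositions, by (N3) we may assume $0\le\alpha<2$. If $\alpha=0$, then $\varphi(2x,2y)=0$ for all $x,y$; since every vector of the linear space $X$ is twice another, $\varphi\equiv 0$, so by (N2) the hypothesis (\ref{201}) forces $f$ to be quadratic and the conclusion holds trivially with $T=0$ and $Q=f$ (uniqueness being that of the splitting of $f$ into an additive and a quadratic summand, which holds because an additive map that is also quadratic vanishes). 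So from now on assume $0<\alpha<2$.

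First I would record the inequalities inherited by $f_{o}$ and $f_{e}$. Write $D(x,y)=f(x+y)+f(x-y)-2f(x)-2f(y)$, so that $2\bigl(f_{e}(x+y)+f_{e}(x-y)-2f_{e}(x)-2f_{e}(y)\bigr)=D(x,y)+D(-x,-y)$ and $2\bigl(f_{o}(x+y)+f_{o}(x-y)-2f_{o}(x)-2f_{o}(y)\bigr)=D(x,y)-D(-x,-y)$. Applying (\ref{201}) at $(x,y)$ and at $(-x,-y)$ and combining through (N4) and (N3) gives, for $j\in\{o,e\}$,
$$N\bigl(f_{j}(x+y)+f_{j}(x-y)-2f_{j}(x)-2f_{j}(y),\,t\bigr)\ \ge\ \Phi(x,y;t),$$
where $\Phi(x,y;t):=\min\{N'(\varphi(x,y),t),\,N'(\varphi(-x,-y),t)\}$.

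Next I would invoke Propositions~\ref{p1} and~\ref{p2} with the control term $N'(\varphi(x,y),t)$ replaced everywhere by $\Phi(x,y;t)$. This replacement changes nothing in their proofs, because the only features of $N'(\varphi(\cdot,\cdot),\cdot)$ used there are the scaling identity (which $\Phi$ inherits from $\varphi(2x,2y)=\alpha\varphi(x,y)$, hence $\varphi(-2x,-2y)=\alpha\varphi(-x,-y)$, giving $\Phi(2x,2y;t)=\Phi(x,y;t/\alpha)$), monotonicity in $t$ with limit $1$, and the triangle inequality (N4) --- all of which a minimum of two such quantities still enjoys. Applying the modified Proposition~\ref{p1} to the odd function $f_{o}$ (with $g=h=f_{o}$) produces a unique additive $T\colon X\to Y$; tracking the constants, the terms $\varphi(\pm x,\pm x)$, $\varphi(\pm x,0)$, $\varphi(0,\pm x)$ that now occur assemble exactly into $M$, so $N\bigl(f_{o}(x)-T(x),t\bigr)\ge M\bigl(x,\tfrac{2-\alpha}{4}t\bigr)$. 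Applying the modified Proposition~\ref{p2} to the even function $f_{e}$ (note $f_{e}(0)=0$) produces a unique quadratic $Q\colon X\to Y$ with $N\bigl(f_{e}(x)-Q(x),t\bigr)\ge M\bigl(x,\tfrac{4-\alpha}{16}t\bigr)$ (weakening to $M$ any control function that is a priori sharper). Since $f=f_{o}+f_{e}$, (N4) and the monotonicity of $M(x,\cdot)$ give
$$N\bigl(f(x)-T(x)-Q(x),\,t\bigr)\ \ge\ \min\Bigl\{M\bigl(x,\tfrac{2-\alpha}{8}t\bigr),\,M\bigl(x,\tfrac{4-\alpha}{32}t\bigr)\Bigr\}\ =\ M\bigl(x,\min\{\tfrac{2-\alpha}{8},\tfrac{4-\alpha}{32}\}\,t\bigr),$$
which is the asserted estimate.

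For uniqueness, let $(T',Q')$ be another such pair and set $c=\min\{\tfrac{2-\alpha}{8},\tfrac{4-\alpha}{32}\}$. Since additive maps are odd and quadratic maps are even, the odd part of $f-T'-Q'$ is $f_{o}-T'$ and its even part is $f_{e}-Q'$; extracting these parts from $N(f(x)-T'(x)-Q'(x),t)\ge M(x,ct)$ via (N4), and using that the defining set of $M$ is invariant under $x\mapsto -x$ (so $M(-x,t)=M(x,t)$), gives $N(f_{o}(x)-T'(x),t)\ge M(x,ct)$ and $N(f_{e}(x)-Q'(x),t)\ge M(x,ct)$. Replacing $x$ by $2^{n}x$, using $T(2^{n}x)=2^{n}T(x)$, $Q(2^{n}x)=4^{n}Q(x)$ and $M(2^{n}x,s)=M(x,s/\alpha^{n})$, and letting $n\to\infty$ --- where $2/\alpha>1$ handles the additive part and $4/\alpha>1$ the quadratic part --- forces $T=T'$ and $Q=Q'$, exactly as in the uniqueness argument of Theorem~\ref{t1}. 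The step I expect to be the main obstacle is the routine-looking but delicate check that Propositions~\ref{p1} and~\ref{p2} really do go through with $\Phi$ in place of $N'(\varphi(x,y),t)$, together with the bookkeeping that identifies the combinations produced with the six-term function $M$ and pins down the constant $\min\{\tfrac{2-\alpha}{8},\tfrac{4-\alpha}{32}\}$, the extra factor $\tfrac12$ coming from the split $t=\tfrac t2+\tfrac t2$ in the last display.
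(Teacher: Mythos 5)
Your proposal is correct and follows essentially the same route as the paper: decompose $f$ into odd and even parts, verify that each satisfies the hypothesis of Propositions~\ref{p1} and~\ref{p2} with the control $\min\{N'(\varphi(x,y),t),N'(\varphi(-x,-y),t)\}$, and combine the two resulting estimates by splitting $t$ in half. Your added care with the degenerate case $\alpha=0$ and with the uniqueness of the pair $(T,Q)$ (rather than just of $T$ and $Q$ separately) goes slightly beyond what the paper writes down, but does not change the method.
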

\begin{proof}
Passing to the odd part $f^0$ and even part $f^e$ of $f$ we
deduce from (\ref{201}) that
\begin{eqnarray*}
N( f^o(x+y) + f^o(x-y)&-& 2 f^o(x) - 2f^o(y), t)\\ &\geq&
\min\{N'(\varphi(x,y), t), N'(\varphi(-x,-y), t)\}
\end{eqnarray*}
and
\begin{eqnarray*}
N( f^e(x+y) + f^e(x-y)&-& 2 f^e(x) - 2f^e(y), t)\\ &\geq&
\min\{N'(\varphi(x,y), t), N'(\varphi(-x,-y), t)\}.
\end{eqnarray*}
Using the proofs of Propositions \ref{p1} and \ref{p2} we get
unique additive mapping $T$ and unique quadratic mapping $Q$
satisfying
\begin{eqnarray*}
N(f^o(x)-T(x), t) \geq M(x, \frac{2- |\alpha |}{4}t),
\end{eqnarray*}
and
\begin{eqnarray*}
N(f^e(x)-Q(x), t) \geq M(x, \frac{4- |\alpha |}{16}t).
\end{eqnarray*}
Therefore
\begin{eqnarray*}
N(f(x)-T(x)-Q(x), t) &\geq& \min\{N(f^o-T(x), \frac{t}{2}),
N(f^e-Q(x), \frac{t}{2})\}\\ &\geq& \min\{M(x, \frac{2- |\alpha
|}{8}t), M(x, \frac{4- |\alpha |}{32}t)\}\\ &=&M(x, \min\{
\frac{2-\alpha}{8}, \frac{4-\alpha}{32}\}\, t)\quad (x \in X,
t>0).
\end{eqnarray*}
\end{proof}

The following example provides an illustration.

\begin{example} Let $(X, <.,.>)$ be an inner product space, $Y$ be a normed
space and $Z$ be the real line $\mathbb{R}$. Let $N$ and
$N^{\prime}$ be the fuzzy norms  on $Y$ and $\mathbb{R}$, defined
by Example \ref{exam} with $k=1$, respectively. Suppose that the
fuzzy metric $N$, makes $Y$ into a fuzzy Banach space. Fix
elements $x_0$, $y_0$ and $z_0$ in $Y$ and $a$ in $X$ and define
$$f(x)= < x, a > x_0 + \|x\|^2 y_0 + \sqrt{\|x\|}\,z_0,$$
$$g(x) =  < x, a > x_0 + \|x\|^2 y_0,$$
$$h(x) = \|x\|^2 y_0 + \sqrt{\| x\|}\,z_0,$$
$$\varphi(x, y) =( \sqrt{\|x+y\|} + \sqrt{ \|x-y\|} - 2
 \sqrt{\|y\|})\, \| z_0\|$$
for each $x,y \in X$. One can easily verified that
$$f(x+y) + f( x-y) - 2g(x) - 2h(y) =( \sqrt{ \|x + y\|} +
 \sqrt{\|x - y \|} - 2 \sqrt{\|y\|})\,z_0$$
for each $x, y \in X$. Therefore
$$N( f(x+y) + f( x-y) - 2g(x) - 2h(y), t ) = N^\prime (\varphi(x,
 y), t)$$ for each $x, y \in X$ and $ t \in \mathbb{R}$.  Moreover,
$\varphi(2x, 2y) = \sqrt{2}\varphi(x, y)$ for each $x, y \in X$.

Therefore the conditions of Propositions \ref{p1} and \ref{p2} for
$f$, $g$, $h$  and $ |\alpha | = \sqrt{2} < 2$ are satisfied. It
follows that odd  and even parts of $f$ can be approximated by
linear and quadratic functions, respectively. In fact  $f^o$, the
odd part of $f$, is equal to $f^o(x)= <x, a>
 x_0$ is linear and the even part of $f$, $f^e$, is equal to
$f^e(x)= \|x\|^2 y_0 + \sqrt{\|x\|} z_0$ contains a quadratic
$Q(x) = \|x\|^2 y_0$ and
$$N(f^e(x) - Q(x), t ) =  N^\prime(\sqrt{\|x\|}\,\|z_0\|, t) \geq N^{''}(x,\frac{4-\sqrt{2}}{16}t).$$
\end{example}

\end{document}